\theoremstyle{plain}
\newtheorem{thm}{Theorem}[section]
\newtheorem{prop}[thm]{Proposition}
\newtheorem{cor}[thm]{Corollary}
\theoremstyle{definition}
\newtheorem{rem}[thm]{Remark}
\newtheorem{dfns-rems}[thm]{Definitions and Remarks}
\newtheorem{notas-rems}[thm]{Notations and Remarks}
\newtheorem{exmps-rems}[thm]{Examples and Remarks}
\begin{document}


\title[Stanley depth and symbolic powers]{Stanley depth and symbolic powers of monomial ideals}


\author[S. A. Seyed Fakhari]{S. A. Seyed Fakhari}

\address{S. A. Seyed Fakhari, School of Mathematics, Institute for Research
in Fundamental Sciences (IPM), P.O. Box 19395-5746, Tehran, Iran.}

\email{fakhari@ipm.ir}

\urladdr{http://math.ipm.ac.ir/fakhari/}


\begin{abstract}
The aim of this paper is to study the Stanley depth of symbolic powers of a squarefree monomial ideal. We prove that for every squarefree monomial ideal $I$ and every pair of integers $k, s\geq 1$, the inequalities ${\rm sdepth} (S/I^{(ks)}) \leq {\rm sdepth} (S/I^{(s)})$ and ${\rm sdepth} (I^{(ks)}) \leq {\rm sdepth} (I^{(s)})$ hold. If moreover $I$ is unmixed of height $d$, then we show that for every integer $k\geq1$, ${\rm sdepth}(I^{(k+d)})\leq {\rm sdepth}(I^{{(k)}})$ and ${\rm sdepth}(S/I^{(k+d)})\leq {\rm sdepth}(S/I^{{(k)}})$. Finally, we consider the limit behavior of the Stanley depth of symbolic powers of a squarefree monomial ideal. We also introduce a method for comparing the Stanley depth of factors of monomial ideals.
\end{abstract}


\subjclass[2000]{Primary: 13C15, 05E99; Secondary: 13C13}


\keywords{Monomial ideal, Stanley depth, Symbolic power}


\thanks{This research was in part supported by a grant from IPM (No. 92130422)}


\maketitle


\section{Introduction} \label{sec1}

Let $\mathbb{K}$ be a field and $S=\mathbb{K}[x_1,\dots,x_n]$ be the
polynomial ring in $n$ variables over the field $\mathbb{K}$. Let $M$ be a nonzero
finitely generated $\mathbb{Z}^n$-graded $S$-module. Let $u\in M$ be a
homogeneous element and $Z\subseteq \{x_1,\dots,x_n\}$. The $\mathbb
{K}$-subspace $u\mathbb{K}[Z]$ generated by all elements $uv$ with $v\in
\mathbb{K}[Z]$ is called a {\it Stanley space} of dimension $|Z|$, if it is
a free $\mathbb{K}[\mathbb{Z}]$-module. Here, as usual, $|Z|$ denotes the
number of elements of $Z$. A decomposition $\mathcal{D}$ of $M$ as a finite
direct sum of Stanley spaces is called a {\it Stanley decomposition} of
$M$. The minimum dimension of a Stanley space in $\mathcal{D}$ is called
{\it Stanley depth} of $\mathcal{D}$ and is denoted by ${\rm
sdepth}(\mathcal {D})$. The quantity $${\rm sdepth}(M):=\max\big\{{\rm
sdepth}(\mathcal{D})\mid \mathcal{D}\ {\rm is\ a\ Stanley\ decomposition\
of}\ M\big\}$$ is called {\it Stanley depth} of $M$. Stanley \cite{s}
conjectured that $${\rm depth}(M) \leq {\rm sdepth}(M)$$ for all
$\mathbb{Z}^n$-graded $S$-modules $M$. As a convention, we set ${\rm sdepth}(M)=0$ , when $M$ is the zero module. For a reader friendly introduction
to Stanley depth, we refer the reader to \cite{psty}.

In this paper, we introduce a method for comparing the Stanley depth of factors of monomial ideals (see Theorem \ref{main}). We show that our method implies the known results regarding the Stanley depth of  radical, integral closure and colon of monomial ideals (see Propositions \ref{rad}, \ref{int1}, \ref{int2}, \ref{colon}).

In Section \ref{sec3}, we apply our method for studying the Stanley depth of symbolic powers of monomial ideals. We show that for every pair of integers $k, s\geq 1$ the Stanley depth of the $k$th symbolic power of a squarefree monomial ideal $I$ is an upper bound for the Stanley depth of the $(ks)$th symbolic power of $I$ (see Theorem \ref{csymbol}). If moreover $I$ is unmixed of height $d$, then we show that for every integer $k\geq 1$, the Stanley depth of the $k$th symbolic power of $I$ is an upper bound for the Stanley depth of the $(k+d)$th symbolic power of $I$ (see Theorem \ref{unmix}). Finally, in Theorem \ref{better} we show that the limit behavior of the Stanley depth of unmixed squarefree monomial  ideals can be very interesting.


\section{A comparison tool for the Stanley depth} \label{sec2}

The following theorem is the main result of this section. Using this result, we deduce some known results regarding the Stanley depth of  radical, integral closure and colon of monomial ideals. We should mention that in the following theorem by ${\rm Mon}(S)$, we mean the set of all monomials in the polynomial ring $S$.

\begin{thm} \label{main}
Let $I_2\subseteq I_1$ and $J_2\subseteq J_1$ be monomial ideals in $S$. Assume that there exists a function $\phi: {\rm Mon}(S)\rightarrow{\rm Mon}(S)$, such that the following conditions are satisfied.
\begin{itemize}
\item[(i)] For every monomial $u\in {\rm Mon}(S)$, $u\in I_1$ if and only if $\phi(u)\in J_1$.
\item[(ii)] For every monomial $u\in {\rm Mon}(S)$, $u\in I_2$ if and only if $\phi(u)\in J_2$.
\item[(iii)] For every Stanley space $u\mathbb{K}[Z]$ and every monomial $v\in {\rm Mon}(S)$, $v\in u\mathbb{K}[Z]$ if and only if $\phi(v)\in \phi(u)\mathbb{K}[Z]$.
\end{itemize}
Then
$${\rm sdepth} (I_1/I_2) \geq {\rm sdepth} (J_1/J_2).$$
\end{thm}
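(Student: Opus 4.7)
The plan is to lift a Stanley decomposition of $J_1/J_2$ through $\phi$ to one of $I_1/I_2$ with at least the same Stanley depth. Fix a Stanley decomposition $\mathcal{D}\colon J_1/J_2 = \bigoplus_{j}w_j\mathbb{K}[Z_j]$ attaining $\mathrm{sdepth}(J_1/J_2)$. First I would extract from condition (iii) that $\phi$ is injective on $\mathrm{Mon}(S)$ (take $Z=\emptyset$, noting $u\mathbb{K}[\emptyset]=\{u\}$) and that $\phi$ preserves divisibility in both directions (take $Z=\{x_1,\ldots,x_n\}$). Together with (i) and (ii), these facts show that $\phi$ restricts to an injection from the monomials of $I_1\setminus I_2$ to the monomials of $J_1\setminus J_2$.

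For each $j$ set $A_j := \{v\in I_1\setminus I_2 : \phi(v)\in w_j\mathbb{K}[Z_j]\}$. Because $\mathcal{D}$ partitions the monomials of $J_1\setminus J_2$, conditions (i) and (ii) imply that the family $\{A_j\}$ partitions the monomials of $I_1\setminus I_2$. Applying (iii) to $u\in A_j$ and $x_k\in Z_j$ gives $ux_k\in A_j$, so each $A_j$ is closed under multiplication by variables in $Z_j$. The crux of the argument is the following claim: either $A_j=\emptyset$, or there exists a unique monomial $u_j\in A_j$ with $A_j = u_j\mathbb{K}[Z_j]$. Granting this, the decomposition $I_1/I_2 = \bigoplus_{j:\, A_j\neq\emptyset} u_j\mathbb{K}[Z_j]$ is a Stanley decomposition of Stanley depth at least $\min_j|Z_j|=\mathrm{sdepth}(\mathcal{D})$, which yields the desired inequality.

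To prove the claim, take $u_j$ to be any element of $A_j$ that is minimal for the order $u\leq_{Z_j} u'$ defined by $u\mid u'$ and $u'/u\in\mathbb{K}[Z_j]$; the $Z_j$-closedness of $A_j$ gives $u_j\mathbb{K}[Z_j]\subseteq A_j$. For the reverse inclusion and uniqueness, given $v\in A_j$, write $\phi(u_j)=w_j m_{u_j}$ and $\phi(v)=w_j m_v$ uniquely with $m_{u_j},m_v\in\mathbb{K}[Z_j]$; condition (iii) translates $v\in u_j\mathbb{K}[Z_j]$ into $m_{u_j}\mid m_v$ in $\mathbb{K}[Z_j]$. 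The main obstacle will be showing that the $\leq_{Z_j}$-minimality of $u_j$ forces $m_{u_j}$ to divide every such $m_v$ and rules out a second incomparable minimum: since $\phi$ need not be surjective onto $w_j\mathbb{K}[Z_j]$, one cannot simply take $u_j$ to be the $\phi$-preimage of $w_j$, and one must instead exploit the rigid ``coordinate-wise'' structure imposed by (iii) on $\phi$ (which, combined with injectivity, essentially forces $\phi$ to act independently on each variable's exponent) to rule out incomparable minimal elements in $A_j$.
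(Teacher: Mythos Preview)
Your overall strategy coincides with the paper's: take an optimal Stanley decomposition $\bigoplus_j w_j\mathbb{K}[Z_j]$ of $J_1/J_2$, set $A_j=\{v\in I_1\setminus I_2:\phi(v)\in w_j\mathbb{K}[Z_j]\}$, and show that each nonempty $A_j$ equals $u_j\mathbb{K}[Z_j]$ for a single monomial $u_j$. You also correctly isolate the crux of the argument.

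The gap is exactly the one you flag yourself and do not close. Choosing $u_j$ as a $\leq_{Z_j}$-minimal element of $A_j$ gives $u_j\mathbb{K}[Z_j]\subseteq A_j$, but nothing in your write-up rules out two incomparable $\leq_{Z_j}$-minimal elements; the sentence about $\phi$ ``acting independently on each variable's exponent'' is correct in spirit but is not proved and is not turned into an argument. As it stands the claim is unproved.

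The paper closes this gap by a different and cleaner choice of $u_j$: it takes $u_j=\gcd(A_j)$ (the ordinary gcd in all variables, which is unique a priori) and proves directly that $\phi(u_j)\in w_j\mathbb{K}[Z_j]$, hence $u_j\in A_j$. The key step is the coordinate-by-coordinate use of (iii) with $Z=\{x_1,\dots,x_n\}\setminus\{x_k\}$: if $\deg_{x_k}\phi(u_j)<\deg_{x_k}w_j$ for some $k$, pick $u\in A_j$ with $\deg_{x_k}(u)=\deg_{x_k}(u_j)$ (such $u$ exists because $u_j$ is a gcd); then $u\in u_j\mathbb{K}[\{x_1,\dots,x_n\}\setminus\{x_k\}]$, so by (iii) $\deg_{x_k}\phi(u)=\deg_{x_k}\phi(u_j)<\deg_{x_k}w_j$, contradicting $\phi(u)\in w_j\mathbb{K}[Z_j]$. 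Once $u_j\in A_j$, the inclusion $A_j\subseteq u_j\mathbb{K}[Z_j]$ follows because $\phi(u_j)\mid\phi(v)$ for every $v\in A_j$ and both lie in $w_j\mathbb{K}[Z_j]$, so $\phi(v)\in\phi(u_j)\mathbb{K}[Z_j]$ and (iii) gives $v\in u_j\mathbb{K}[Z_j]$. This is precisely the ``rigid coordinate-wise structure'' you allude to, made concrete; replacing your minimal-element choice by the gcd is what makes the argument go through.
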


\begin{proof}
Consider a Stanley decomposition $$\mathcal{D} : J_1/J_2=\bigoplus_{i=1}^m t_i \mathbb{K}[Z_i]$$ of $J_1/J_2$, such that ${\rm sdepth}(\mathcal{D})={\rm sdepth} (J_1/J_2)$. By assumptions, for every monomial $u\in I_1\setminus I_2$, we have $$\phi(u)\in J_1\setminus J_2.$$Thus for each monomial $u\in I_1\setminus I_2$, we define $Z_u:=Z_i$ and $t_u:=t_i$, where $i\in\{1, \ldots, m\}$ is the uniquely determined index, such that $\phi(u)\in t_i \mathbb{K}[Z_i]$. It is clear that $$I_1\setminus I_2\subseteq \sum u\mathbb{K}[Z_u],$$where the sum is taken over all monomials $u\in I_1\setminus I_2$. For the converse inclusion note that for every $u\in I_1\setminus I_2$ and every monomial $h\in \mathbb{K}[Z_u]$, clearly we have $uh\in I_1$. By the choice of $t_u$ and $Z_u$, we conclude $\phi(u)\in t_u \mathbb{K}[Z_u]$ and therefore by (iii),  $$\phi(uh)\in \phi(u) \mathbb{K}[Z_u] \subseteq t_u \mathbb{K}[Z_u].$$This implies that $\phi(uh)\notin J_2$ and it follows from (ii) that $uh\notin I_2$. Thus $$I_1/I_2= \sum u\mathbb{K}[Z_u],$$ where the sum is taken over all monomials $u\in I_1\setminus I_2$.

Now for every $1\leq i \leq m$, let
$$U_i=\{u\in I_1\setminus I_2: Z_u=Z_i \ {\rm and} \ t_u=t_i\}.$$
Without lose of generality we may assume that $U_i\neq \emptyset$ for every $1\leq i \leq l$ and $U_i=\emptyset$ for every $l+1\leq i\leq m$. Note that $$I_1/I_2= \sum_{i=1}^l \sum u\mathbb{K}[Z_i],$$ where the second sum is taken over all monomials $u\in U_i$. For every $1\leq i \leq l$, let $u_i$ be the greatest common divisor of elements of $U_i$. We claim that for every $1\leq i \leq l$, $u_i\in U_i$.

{\it Proof of claim.} It is enough to show that $\phi(u_i)\in t_i\mathbb{K}[Z_i]$. This, together with (i) and (ii) implies that $u_i\in I_1\setminus I_2$ and $Z_{u_i}=Z_i$ and $t_{u_i}=t_i$ and hence $u_i\in U_i$. So assume that $t_i$ does not divide $\phi(u_i)$. Then there exists $1\leq j \leq n$, such that ${\rm deg}_{x_j}(\phi(u_i)) < {\rm deg}_{x_j}(t_i)$, where for every monomial $v\in S$, ${\rm deg}_{x_j}(v)$ denotes the degree of $v$ with respect to the variable $x_j$. Also by the choice of $u_i$, there exists a monomial $u\in U_i$, such that ${\rm deg}_{x_j}(u)= {\rm deg}_{x_j}(u_i)$. We conclude that $$u\in u_i\mathbb{K}[x_1, \ldots, x_{j-1}, x_{j+1}, \ldots, x_n],$$and hence by (iii)$$\phi(u)\in \phi(u_i)\mathbb{K}[x_1, \ldots, x_{j-1}, x_{j+1}, \ldots, x_n].$$ This shows that $${\rm deg}_{x_j}(\phi(u))={\rm deg}_{x_j}(\phi(u_i))< {\rm deg}_{x_j}(t_i).$$ It follows that $t_i$ does not divide $\phi(u)$, which is a contradiction, since $\phi(u)\in t_i \mathbb{K}[Z_i]$. Hence $t_i$ divides $\phi(u_i)$. On the other hand, since $u_i$ divides every monomial $u\in U_i$, (iii) implies that for every monomial $u\in U_i$, $\phi(u_i)$ divides $\phi(u)$. Note that by the definition of $U_i$, for every for every monomial $u\in U_i$, $\phi(u)\in t_i\mathbb{K}[Z_i]$. It follows that $$\phi(u_i)\in t_i\mathbb{K}[Z_i]$$and this completes the proof of our claim.

Our claim implies that for every $1\leq i \leq l$, we have$$u_i\mathbb{K}[Z_i]\subseteq \sum_{u\in U_i} u\mathbb{K}[Z_i].$$On the other hand (iii) implies that, for every monomial $u\in U_i$, $\phi(u_i)$ divides $\phi(u)$. Since $$\phi(u_i)\in t_i\mathbb{K}[Z_i] \ \ \ \ and \ \ \ \ \phi(u)\in t_i\mathbb{K}[Z_i],$$we conclude that $$\phi(u)\in \phi(u_i)\mathbb{K}[Z_i]$$and it follows from (iii) that $$u\in u_i\mathbb{K}[Z_i]$$and thus$$u_i\mathbb{K}[Z_i]= \sum_{u\in U_i} u\mathbb{K}[Z_i].$$Therefore $$I_1/I_2= \sum_{i=1}^l u_i\mathbb{K}[Z_i].$$

Next we prove that for every $1\leq i,j \leq l$ with $i\neq j$, the summands $u_i\mathbb{K}[Z_i]$ and $u_j\mathbb{K}[Z_j]$ intersect trivially. By contradiction, let $v$ be a monomial in $u_i\mathbb{K}[Z_i]\cap u_j\mathbb{K}[Z_j]$. Then there exist $h_i\in \mathbb{K}[Z_i]$ and $h_j\in \mathbb{K}[Z_j]$, such that $u_ih_i=v=u_jh_j$. Therefore $\phi(u_ih_i)=\phi(v)=\phi(u_jh_j)$. But $u_i\in U_i$ and hence $\phi(u_i)\in t_i \mathbb{K}[Z_i]$, which by (iii) implies that $$\phi(u_ih_i)\in \phi(u_i)\mathbb{K}[Z_i]\subseteq t_i \mathbb{K}[Z_i].$$Similarly $\phi(u_jh_j)\in t_j \mathbb{K}[Z_j]$. Thus $$\phi(v)\in t_i \mathbb{K}[Z_i]\cap t_j \mathbb{K}[Z_j],$$ which is a contradiction, because $\bigoplus_{i=1}^m t_i \mathbb{K}[Z_i]$ is a Stanley decomposition of $J_1/J_2$. Therefore  $$I_1/I_2=\bigoplus_{i=1}^l u_i \mathbb{K}[Z_i]$$ is a Stanley decomposition of $I_1/I_2$ which proves that$${\rm sdepth} (I_1/I_2)\geq \min_{i=1}^l|Z_i|\geq{\rm sdepth} (J_1/J_2).$$
\end{proof}

Using Theorem \ref{main}, we are able to deduce many known results regarding the Stanley depth of factors of monomial ideals. For example, it is known that the Stanley depth of the radical of a monomial ideal $I$ is an upper bound for the Stanley depth of $I$. In the following proposition we show that this result follows from Theorem \ref{main}.

\begin{prop} \label{rad}
{\rm (}See \cite{a, i'}{\rm )} Let $J\subseteq I$ be monomial ideals in $S$. Then $${\rm sdepth}(I/J)\leq {\rm sdepth}(\sqrt{I}/\sqrt{J}).$$
\end{prop}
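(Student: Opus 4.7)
The plan is to apply Theorem~\ref{main} with $I_1=\sqrt{I}$, $I_2=\sqrt{J}$, $J_1=I$, $J_2=J$, since the conclusion of that theorem will then read exactly ${\rm sdepth}(\sqrt{I}/\sqrt{J})\geq {\rm sdepth}(I/J)$. Thus the only task is to exhibit a function $\phi:{\rm Mon}(S)\to {\rm Mon}(S)$ satisfying conditions (i), (ii) and (iii) for this choice of ideals.

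I propose $\phi(u):=u^N$, where $N$ is a fixed integer strictly larger than the maximum exponent of any variable appearing in a minimal monomial generator of $I$ or of $J$. Condition (iii) is then essentially automatic: for monomials $u,v\in S$, one has $u\mid v$ if and only if $u^N\mid v^N$, and when divisibility holds, $v/u$ involves only variables in $Z$ if and only if $v^N/u^N=(v/u)^N$ does; therefore $v\in u\mathbb{K}[Z]$ if and only if $\phi(v)\in \phi(u)\mathbb{K}[Z]$.

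For (i) and (ii), I will use the standard description of the radical of a monomial ideal: for a monomial ideal $L$, a monomial $u$ lies in $\sqrt{L}$ if and only if the support of some minimal monomial generator $g$ of $L$ is contained in the support of $u$. Given such a $g$, the choice of $N$ guarantees ${\rm deg}_{x_i}(u^N)\geq N\geq {\rm deg}_{x_i}(g)$ for every variable $x_i$ in the support of $g$, so $g\mid u^N$ and hence $u^N\in L$; conversely, $u^N\in L$ trivially gives $u\in\sqrt{L}$. Taking $L=I$ yields (i) and $L=J$ yields (ii).

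The main point that deserves attention is the choice of a single exponent $N$ that works uniformly for every monomial and for both ideals; this is precisely what forces the bound on $N$, but because $I$ and $J$ are finitely generated monomial ideals such an $N$ always exists. Once (i)--(iii) are verified, Theorem~\ref{main} delivers the desired inequality without further work.
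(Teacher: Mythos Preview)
Your proof is correct and follows the same route as the paper: both apply Theorem~\ref{main} with $\phi(u)=u^{k}$ for a single exponent $k$ chosen so that $u\in\sqrt{L}\Leftrightarrow u^{k}\in L$ holds uniformly for $L\in\{I,J\}$. The only difference is cosmetic: the paper obtains $k$ as the least common multiple of exponents $k_i$ with $u_i^{k_i}\in I$ (resp.\ $J$) for the generators $u_i$ of $\sqrt{I}$ (resp.\ $\sqrt{J}$), whereas you take any $N$ exceeding all exponents in the minimal generators of $I$ and $J$; either choice works and the rest of the argument is identical.
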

\begin{proof}
Let $G(\sqrt{I})=\{u_1, \ldots, u_s\}$ be the minimal set of monomial generators of $\sqrt{I}$. For every $1\leq i \leq s$, there exists an integer $k_i\geq 1$ such that $u_i^{k_i}\in I$. Let $k_I={\rm lcm}(k_1, \ldots,k_s)$ be the least common multiple of $k_1, \ldots, k_s$. Now for every $1\leq i \leq s$, we have $u_i^{k_I}\in I$ and this implies that $u^{k_I}\in I$, for every monomial $u\in \sqrt{I}$. It follows that for every monomial $u\in S$, we have $u\in \sqrt{I}$ if and only if $u^{k_I}\in I$. Similarly there exists an integer $k_J$, such that for every monomial $u\in S$, $u\in \sqrt{J}$ if and only if $u^{k_J}\in J$. Let $k={\rm lcm}(k_I, k_J)$ be the least common multiple of $k_I$ and $k_J$. For every monomial $u\in S$, we define $\phi(u)=u^k$. It is clear that $\phi$ satisfies the hypothesis of Theorem \ref{main}. Hence it follows from that theorem that $${\rm sdepth}(I/J)\leq {\rm sdepth}(\sqrt{I}/\sqrt{J}).$$
\end{proof}

Let $I\subset S$ be an arbitrary ideal. An element $f \in S$ is
{\it integral} over $I$, if there exists an equation
$$f^k + c_1f^{k-1}+ \ldots + c_{k-1}f + c_k = 0 {\rm \ \ \ \ with} \ c_i\in I^i.$$
The set of elements $\overline{I}$ in $S$ which are integral over $I$ is the {\it integral closure}
of $I$. It is known that the integral closure of a monomial ideal $I\subset S$ is a monomial ideal
generated by all monomials $u \in S$ for which there exists an integer $k$ such that
$u^k\in I^k$ (see \cite[Theorem 1.4.2]{hh'}).

Let $I$ be a monomial ideal in $S$ and let $k\geq 1$ be a fixed integer. Then for every monomial $u\in S$, we have  $u\in \overline{I}$ if and only if $u^s\in I^s$, for some $s\geq 1$ if and only if $u^{ks'}\in I^{ks'}$, for some $s'\geq 1$ if and only if $u^k\in \overline{I^k}$. This shows that by setting $\phi(u)=u^k$ in Theorem \ref{main} we obtain the following result from \cite{s1}. We should mention that the method which is used in the proof of Theorem \ref{main} is essentially a generalization of one which is used in \cite{s1}.

\begin{prop} \label{int1}
{\rm (}\cite[Theorem 2.1]{s1}{\rm )} Let $J\subseteq I$ be two monomial ideals in $S$. Then for every integer $k\geq 1$
$${\rm sdepth} (\overline{I^k}/\overline{J^k}) \leq {\rm sdepth} (\overline{I}/\overline{J}).$$
\end{prop}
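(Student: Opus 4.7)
The plan is to invoke Theorem \ref{main} with $I_1=\overline{I}$, $I_2=\overline{J}$, $J_1=\overline{I^{k}}$, $J_2=\overline{J^{k}}$, and the function $\phi\colon {\rm Mon}(S)\to{\rm Mon}(S)$ given by $\phi(u)=u^{k}$. Theorem \ref{main} then delivers exactly the inequality ${\rm sdepth}(\overline{I}/\overline{J})\geq {\rm sdepth}(\overline{I^{k}}/\overline{J^{k}})$, which is the claim. The entire task therefore reduces to checking the three hypotheses (i)--(iii) for this choice of $\phi$.

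Conditions (i) and (ii) are essentially established in the discussion preceding the proposition. Concretely, using the characterization $v\in \overline{L}$ iff $v^{s}\in L^{s}$ for some $s\geq 1$, I would chain equivalences: $u\in\overline{I}$ iff $u^{s}\in I^{s}$ for some $s\geq 1$, iff $u^{ks'}\in I^{ks'}$ for some $s'\geq 1$ (both directions: one uses $s'=s$ after multiplying by $u^{(k-1)s}$ on both sides, or more cleanly one replaces $s$ by $ks$; the other picks $s'$ so that $ks'\geq s$), iff $(u^{k})^{s'}\in (I^{k})^{s'}$ for some $s'\geq 1$, iff $u^{k}\in\overline{I^{k}}$. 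The same chain with $J$ in place of $I$ yields (ii).

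Condition (iii) is an elementary check about monomials. For monomials $u,v$ and a subset $Z\subseteq\{x_{1},\dots,x_{n}\}$, membership $v\in u\mathbb{K}[Z]$ is equivalent to the two statements that $u$ divides $v$ and that every variable appearing in the quotient $v/u$ lies in $Z$. Raising to the $k$-th power preserves both: for monomials $u\mid v$ iff $u^{k}\mid v^{k}$, and $v^{k}/u^{k}=(v/u)^{k}$ has the same support as $v/u$. Thus $v\in u\mathbb{K}[Z]$ iff $\phi(v)\in\phi(u)\mathbb{K}[Z]$, establishing (iii). No step here is a real obstacle: the substantive work was absorbed into Theorem \ref{main} and into the characterization of integral closure recalled above, so once (i)--(iii) are in hand the proposition follows immediately.
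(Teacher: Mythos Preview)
Your proposal is correct and follows exactly the paper's approach: apply Theorem~\ref{main} with $\phi(u)=u^{k}$ and verify hypotheses (i)--(ii) via the chain of equivalences $u\in\overline{I}\Leftrightarrow u^{s}\in I^{s}$ for some $s\Leftrightarrow u^{ks'}\in I^{ks'}$ for some $s'\Leftrightarrow u^{k}\in\overline{I^{k}}$. The paper leaves condition (iii) implicit while you spell it out, and your parenthetical justification of the middle equivalence is slightly garbled (the backward direction just takes $s=ks'$), but otherwise the arguments coincide.
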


Similarly, using Theorem \ref{main} we can deduce the following result from \cite{s1}.

\begin{prop} \label{int2}
{\rm (}\cite[Theorem 2.8]{s1}{\rm )} Let $I_2\subseteq I_1$ be two monomial ideals in $S$. Then there  exists an integer $k\geq 1$, such that for every $s\geq 1$
$${\rm sdepth} (I_1^{sk}/I_2^{sk}) \leq {\rm sdepth} (\overline{I_1}/\overline{I_2}).$$
\end{prop}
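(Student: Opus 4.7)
The plan is to invoke Theorem \ref{main} with $(\overline{I_1}, \overline{I_2})$ playing the role of $(I_1, I_2)$ in that theorem and $(I_1^{sk}, I_2^{sk})$ playing the role of $(J_1, J_2)$, using the map $\phi(u) = u^{sk}$. The crucial point is to produce a single integer $k$ depending only on $I_1$ and $I_2$ (not on $s$) so that raising monomials to the $k$-th power detects membership in $\overline{I_j}$ via $I_j^k$.

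First I would construct such a $k$. For each $j \in \{1,2\}$ the ideal $\overline{I_j}$ is a finitely generated monomial ideal; let $v_{j,1}, \dots, v_{j,r_j}$ be its minimal monomial generators. The characterization of integral closure recalled above supplies integers $k_{j,i} \geq 1$ with $v_{j,i}^{k_{j,i}} \in I_j^{k_{j,i}}$. Take $k$ to be any common multiple of all the $k_{j,i}$. Then $v_{j,i}^k = (v_{j,i}^{k_{j,i}})^{k/k_{j,i}} \in I_j^k$, and since every monomial of $\overline{I_j}$ is a multiple of some $v_{j,i}$, it follows that $u^k \in I_j^k$ for every monomial $u \in \overline{I_j}$.

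From this one derives the key biconditional
$$u \in \overline{I_j} \;\;\Longleftrightarrow\;\; u^{sk} \in I_j^{sk}$$
for every monomial $u \in S$, every $s \geq 1$, and $j \in \{1,2\}$: the forward implication is obtained by raising $u^k \in I_j^k$ to the $s$-th power, while the reverse is immediate from the definition of integral closure. With $\phi(u) := u^{sk}$, conditions (i) and (ii) of Theorem \ref{main} are precisely this biconditional. For (iii), note that for monomials $u, v$ one has $u \mid v$ iff $u^{sk} \mid v^{sk}$, and the support of $(v/u)^{sk}$ equals that of $v/u$; hence $v \in u\mathbb{K}[Z]$ iff $\phi(v) \in \phi(u)\mathbb{K}[Z]$. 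Theorem \ref{main} then gives the inequality, uniformly in $s$.

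The only real obstacle is producing a single $k$ that handles both $I_1$ and $I_2$ while remaining independent of $s$; once that is in place, verifying the hypotheses of Theorem \ref{main} is essentially formal, and the argument mirrors the technique used in \cite{s1} and already employed for Proposition \ref{int1}.
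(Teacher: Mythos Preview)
Your proposal is correct and follows essentially the same route as the paper: construct a single $k$ (you build it from the generators of $\overline{I_j}$, the paper cites \cite[Remark 1.1]{s1} to the same effect) so that $u\in\overline{I_j}\Leftrightarrow u^{sk}\in I_j^{sk}$, then apply Theorem~\ref{main} with $\phi(u)=u^{sk}$. Your explicit verification of condition~(iii) is a minor addition; the paper leaves this implicit.
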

\begin{proof}
Note that by Remark \cite[Remark 1.1]{s1}, there exist integers  $k_1, k_2\geq 1$, such that for every monomial $u\in S$, we have $u^{k_1}\in I_1^{k_1}$ (resp. $u^{k_2}\in I_2^{k_2}$) if and only if $u\in \overline{I_1}$ (resp. $u\in \overline{I_2}$). Let $k={\rm lcm}(k_1,k_2)$ be the least common multiple of $k_1$ and $k_2$. Then for every monomial $u\in S$, we have $u^k\in I_1^k$ (resp. $u^k\in I_2^k$) if and only if $u\in \overline{I_1}$ (resp. $u\in \overline{I_2}$). Hence for every monomial $u\in S$ and every $s\geq 1$, we have $u^{sk}\in I_1^{sk}$ (resp. $u^{sk}\in I_2^{sk}$) if and only if $u\in \overline{I_1}$ (resp. $u\in \overline{I_2}$). Set $\phi(u)=u^{sk}$, for every monomial $u\in S$ and every $s\geq 1$. Now the assertion follows from Theorem \ref{main}
\end{proof}

Let $I$ be a monomial ideal in $S$ and $v\in S$ be a monomial. It can be easily seen that $(I:v)$ is a monomial ideal. Popescu \cite{p} proves that ${\rm sdepth}(I:v)\geq {\rm sdepth}(I)$. On the other hand, Cimpoeas \cite{c} proves that ${\rm sdepth}(S/(I:v))\geq {\rm sdepth}(S/I)$. Using Theorem \ref{main}, we prove a generalization of these results.

\begin{prop} \label{colon}
Let $J\subseteq I$ be monomial ideals in $S$ and $v\in S$ be a monomial. Then $${\rm sdepth}(I/J)\leq {\rm sdepth}((I:v)/(J:v)).$$
\end{prop}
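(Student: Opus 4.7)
The plan is to apply Theorem \ref{main} with the assignment $I_1=(I:v)$, $I_2=(J:v)$, $J_1=I$, $J_2=J$, and the candidate map $\phi:{\rm Mon}(S)\to{\rm Mon}(S)$ defined by $\phi(u)=uv$. Under this setup the desired inequality ${\rm sdepth}(I/J)\le {\rm sdepth}((I:v)/(J:v))$ is exactly the conclusion ${\rm sdepth}(I_1/I_2)\ge{\rm sdepth}(J_1/J_2)$ of that theorem, so it suffices to verify the three hypotheses for $\phi$.

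Conditions (i) and (ii) are immediate from the definition of a colon ideal restricted to monomials: by definition a monomial $u$ lies in $(I:v)$ if and only if $uv=\phi(u)$ lies in $I$, and likewise for $J$. These are literally translations of the defining property of the colon, so nothing needs to be proved beyond unwinding notation.

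Condition (iii) is the only place where one must think. Fix a Stanley space $u\mathbb{K}[Z]$ and a monomial $w\in{\rm Mon}(S)$. In one direction, if $w\in u\mathbb{K}[Z]$ then $w=uh$ for some monomial $h\in\mathbb{K}[Z]$, so $\phi(w)=wv=(uv)h=\phi(u)h\in\phi(u)\mathbb{K}[Z]$. For the converse, suppose $\phi(w)\in\phi(u)\mathbb{K}[Z]$, so that $wv=uvh'$ for some monomial $h'\in\mathbb{K}[Z]$. Since $S$ is a domain we may cancel the monomial $v$ from both sides to obtain $w=uh'$, which gives $w\in u\mathbb{K}[Z]$. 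Thus (iii) holds.

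With all three conditions of Theorem \ref{main} verified for $\phi(u)=uv$, the theorem yields ${\rm sdepth}((I:v)/(J:v))\ge{\rm sdepth}(I/J)$, which is the desired conclusion. The only conceptual point in the argument is the cancellation used in the reverse direction of (iii); everything else is a straightforward rewriting of definitions, so I do not expect any real obstacle.
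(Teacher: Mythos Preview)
Your proposal is correct and follows exactly the same approach as the paper: apply Theorem~\ref{main} with $\phi(u)=uv$ (and the obvious identifications $I_1=(I:v)$, $I_2=(J:v)$, $J_1=I$, $J_2=J$). The paper's proof is a single sentence asserting that this $\phi$ works, while you have spelled out the verification of hypotheses (i)--(iii), including the cancellation step for the reverse direction of (iii); this added detail is fine and changes nothing substantive.
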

\begin{proof}
It is just enough to use Theorem \ref{main} by setting $\phi(u)=vu$, for every monomial $u\in S$
\end{proof}


\section{Stanley depth of symbolic powers} \label{sec3}

Let $I$ be a squarefree monomial ideal in $S$ and suppose that $I$ has the irredundant
primary decomposition $$I=\frak{p}_1\cap\ldots\cap\frak{p}_r,$$ where every
$\frak{p}_i$ is an ideal of $S$ generated by a subset of the variables of
$S$. Let $k$ be a positive integer. The $k$th {\it symbolic power} of $I$,
denoted by $I^{(k)}$, is defined to be $$I^{(k)}=\frak{p}_1^k\cap\ldots\cap
\frak{p}_r^k.$$As a convention, we define the $k$th symbolic power of $S$ to be equal to $S$, for every $k\geq 1$.

We now use Theorem \ref{main} to prove a new result. Indeed, we use Theorem \ref{main} to compare the Stanley depth of symbolic powers of squarefree monomial ideals.

\begin{thm} \label{symbol}
Let $J\subseteq I$ be squarefree monomial ideals in $S$. Then for every pair of integers $k,s\geq 1$
$${\rm sdepth} (I^{(ks)}/J^{(ks)}) \leq {\rm sdepth} (I^{(s)}/J^{(s)}).$$
\end{thm}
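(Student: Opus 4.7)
The plan is to invoke Theorem \ref{main} with the setup $I_1 = I^{(s)}$, $I_2 = J^{(s)}$, $J_1 = I^{(ks)}$, $J_2 = J^{(ks)}$, and the map $\phi : \mathrm{Mon}(S) \to \mathrm{Mon}(S)$ defined by $\phi(u) = u^k$. Once the three conditions are verified, Theorem \ref{main} yields $\mathrm{sdepth}(I^{(s)}/J^{(s)}) \geq \mathrm{sdepth}(I^{(ks)}/J^{(ks)})$, which is precisely the assertion.

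The first step is a membership reformulation. Write the irredundant primary decompositions $I = \mathfrak{p}_1 \cap \cdots \cap \mathfrak{p}_r$ and $J = \mathfrak{q}_1 \cap \cdots \cap \mathfrak{q}_t$, where each $\mathfrak{p}_i$, $\mathfrak{q}_j$ is generated by a subset of the variables. For a monomial prime $\mathfrak{p} = (x_{i_1}, \ldots, x_{i_d})$ and a monomial $u$, let $\deg_\mathfrak{p}(u) = \sum_{\ell} \deg_{x_{i_\ell}}(u)$; then $u \in \mathfrak{p}^a$ if and only if $\deg_\mathfrak{p}(u) \geq a$. Hence $u \in I^{(s)}$ if and only if $\deg_{\mathfrak{p}_i}(u) \geq s$ for every $i$, and analogously for $J^{(s)}$.

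Conditions (i) and (ii) now follow from the identity $\deg_\mathfrak{p}(u^k) = k \cdot \deg_\mathfrak{p}(u)$: indeed,
\[
u^k \in I^{(ks)} \iff k \cdot \deg_{\mathfrak{p}_i}(u) \geq ks \text{ for all } i \iff \deg_{\mathfrak{p}_i}(u) \geq s \text{ for all } i \iff u \in I^{(s)},
\]
and the same reasoning gives (ii) with the primes $\mathfrak{q}_j$. For condition (iii), note that for monomials $u,v$ and any $Z \subseteq \{x_1,\ldots,x_n\}$, membership $v \in u\mathbb{K}[Z]$ amounts to $u \mid v$ together with $\mathrm{supp}(v/u) \subseteq Z$. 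Since $u \mid v$ is equivalent to $u^k \mid v^k$, and since $\mathrm{supp}(v^k/u^k) = \mathrm{supp}((v/u)^k) = \mathrm{supp}(v/u)$, the equivalence $v \in u\mathbb{K}[Z] \Longleftrightarrow v^k \in u^k\mathbb{K}[Z]$ holds.

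There is no serious obstacle here: the only point requiring care is that symbolic powers of squarefree monomial ideals behave well under raising to the $k$th power because their primary components are generated by subsets of variables, which is exactly what makes $\deg_\mathfrak{p}$ scale linearly. With conditions (i)--(iii) verified, Theorem \ref{main} delivers the inequality immediately.
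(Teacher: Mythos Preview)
Your proof is correct and follows essentially the same approach as the paper: both apply Theorem~\ref{main} with $\phi(u)=u^k$, using the characterization of membership in $I^{(s)}$ via the inequalities $\sum_{x_j\in\mathfrak{p}_i}\deg_{x_j}(u)\ge s$ and the linear scaling $\deg_{\mathfrak{p}}(u^k)=k\cdot\deg_{\mathfrak{p}}(u)$. Your write-up is in fact slightly more explicit than the paper's, since you spell out the verification of condition~(iii), which the paper leaves implicit (the same map $\phi(u)=u^k$ having already appeared in the proofs of Propositions~\ref{rad}--\ref{int2}).
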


\begin{proof}
Suppose that $I=\cap_{i=1}^r\frak{p}_i$ is the irredundant primary decomposition of $I$ and let $u\in S$ be a monomial. Then $u\in I^{(s)}$ if and only if for every $1\leq i\leq r$ $$\sum_{x_j\in P_i}{\rm deg}_{x_j}u\geq s$$ if and only if $$\sum_{x_j\in P_i}{\rm deg}_{x_j}u^k\geq sk$$ if and only if $u^k\in I^{(sk)}$. By a similar argument, $u \in J^{(s)}$ if and only if $u^k\in J^{(sk)}$. Thus for proving our assertion, it is enough to use Theorem \ref{main}, by setting $\phi(u)=u^k$, for every monomial $u\in S$.
\end{proof}

The following corollary is an immediate consequence of Theorem \ref{symbol}.

\begin{cor} \label{csymbol}
Let $I$ be a squarefree monomial ideals in $S$. Then for every pair of integers $k,s\geq 1$, the inequalities
$${\rm sdepth} (S/I^{(ks)}) \leq {\rm sdepth} (S/I^{(s)})$$and $${\rm sdepth} (I^{(ks)}) \leq {\rm sdepth} (I^{(s)})$$hold.
\end{cor}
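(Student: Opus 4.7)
The plan is to deduce both inequalities directly from Theorem \ref{symbol} by specializing the pair of squarefree monomial ideals $(I,J)$ to the two extreme boundary cases, so no new argument is really needed.

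For the first inequality, I take the outer ideal in Theorem \ref{symbol} to be the whole ring $S$ and the inner ideal to be $I$. Using the convention stated at the beginning of Section \ref{sec3} that $S^{(k)}=S$ for every $k\geq 1$, the quotient $S^{(ks)}/I^{(ks)}$ is literally $S/I^{(ks)}$, and similarly $S^{(s)}/I^{(s)}=S/I^{(s)}$. Theorem \ref{symbol} therefore gives ${\rm sdepth}(S/I^{(ks)}) \leq {\rm sdepth}(S/I^{(s)})$. For the second inequality, I take the outer ideal to be $I$ and the inner ideal to be the zero ideal, whose symbolic powers are all zero. The quotients $I^{(ks)}/(0)^{(ks)}$ and $I^{(s)}/(0)^{(s)}$ then collapse to $I^{(ks)}$ and $I^{(s)}$, giving ${\rm sdepth}(I^{(ks)}) \leq {\rm sdepth}(I^{(s)})$.

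The only technicality is that Theorem \ref{symbol} is phrased for a pair of squarefree monomial ideals $J\subseteq I$, while here one of the two ideals is either the unit ideal or the zero ideal. This is not a genuine obstacle: the proof of Theorem \ref{symbol} only uses the map $\phi(u)=u^k$ together with the membership criteria $u\in I^{(s)} \Longleftrightarrow u^k\in I^{(sk)}$ supplied by the primary decomposition, and in the two degenerate cases these equivalences hold trivially (every monomial lies in $S$, and no nonzero monomial lies in the zero ideal). One may equally well invoke the convention $S^{(k)}=S$ already adopted in Section \ref{sec3} and the analogous trivial convention for the zero ideal. In either case Theorem \ref{symbol} applies verbatim and the corollary follows.
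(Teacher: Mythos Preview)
Your proposal is correct and follows exactly the approach the paper intends: the paper simply states that Corollary~\ref{csymbol} is ``an immediate consequence of Theorem~\ref{symbol}'', and your two specializations (outer ideal $S$ with the convention $S^{(k)}=S$, and inner ideal $0$) are precisely the intended ones. Your remark that the degenerate cases can be handled either by convention or by checking directly that $\phi(u)=u^k$ still satisfies the hypotheses of Theorem~\ref{main} is a reasonable clarification of a point the paper leaves implicit.
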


\begin{rem} \label{phi}
Let $t\geq 1$ be a fixed integer. Also let $I$ be a squarefree monomial ideal in $S$ and suppose that
$I=\cap_{i=1}^r\frak{p}_i$ is the irredundant primary decomposition of $I$.
Assume that $A\subseteq \{x_1, \ldots, x_n\}$ is a subset of variables
 of $S$, such that $$|\frak{p}_i\cap A|=t,$$for every $1\leq i \leq r$.
 We set $v=\Pi_{x_i\in A}x_i$. It is clear that for every integer $k\geq 1$ and every integer $1\leq i \leq r$, a monomial $u\in {\rm Mon(S)}$ belongs to $\frak{p}_i^k$ if and only if $uv$ belongs to $\frak{p}_i^{k+t}$. This implies that for every integer $k\geq 1$, a monomial $u\in {\rm Mon(S)}$ belongs to $I^{(k)}$ if and only if $uv$ belongs to $I^{(k+t)}$. This shows $$(I^{(k+t)}:v)=I^{(k)}$$ and thus Proposition \ref{colon} implies that $${\rm sdepth}(I^{(k+t)})\leq {\rm sdepth}(I^{{(k)}})$$and  $${\rm sdepth}(S/I^{(k+t)})\leq {\rm sdepth}(S/I^{{(k)}}).$$In particular, we conclude the following result.
\end{rem}

\begin{prop} \label{one}
Let $I$ be a squarefree monomial ideal in $S$ and suppose there exists a subset $A\subseteq \{x_1, \ldots, x_n\}$ of variables
 of $S$, such that for every prime ideal $\frak{p}\in {\rm Ass}(S/I)$, $$|\frak{p}\cap A|=1.$$Then for every integer $k\geq 1$, the inequalities $${\rm sdepth}(I^{(k+1)})\leq {\rm sdepth}(I^{{(k)}})$$and $${\rm sdepth}(S/I^{(k+1)})\leq {\rm sdepth}(S/I^{{(k)}})$$hold.
\end{prop}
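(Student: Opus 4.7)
The plan is to obtain Proposition \ref{one} as the $t=1$ specialization of Remark \ref{phi}. Since $I$ is squarefree, the set ${\rm Ass}(S/I)$ is precisely the collection of minimal primes $\{\frak{p}_1,\ldots,\frak{p}_r\}$ appearing in the irredundant primary decomposition $I=\bigcap_i\frak{p}_i$. The hypothesis $|\frak{p}\cap A|=1$ for all $\frak{p}\in {\rm Ass}(S/I)$ is therefore exactly the hypothesis of Remark \ref{phi} with $t=1$, and the two desired inequalities follow from the conclusion of that remark.

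For completeness, I would unpack the argument as follows. Set $v=\prod_{x_i\in A}x_i$. Because $|\frak{p}_i\cap A|=1$ for every $i$, multiplication by $v$ raises the $\frak{p}_i$-valuation $\sum_{x_j\in\frak{p}_i}{\rm deg}_{x_j}(\cdot)$ of any monomial by exactly $1$; hence a monomial $u$ lies in $\frak{p}_i^k$ if and only if $uv$ lies in $\frak{p}_i^{k+1}$. Intersecting over $i$ yields the colon identity
$$(I^{(k+1)}:v)=I^{(k)}.$$
Now I would apply Proposition \ref{colon} twice: first to the inclusion $(0)\subseteq I^{(k+1)}$ with the monomial $v$, which gives ${\rm sdepth}(I^{(k+1)})\leq {\rm sdepth}(I^{(k)})$; second to the inclusion $I^{(k+1)}\subseteq S$ with $v$, and using $(S:v)=S$, which gives ${\rm sdepth}(S/I^{(k+1)})\leq {\rm sdepth}(S/I^{(k)})$.

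Since Remark \ref{phi} does essentially all of the work, no real obstacle arises. The one point that requires care is the colon identity $(I^{(k+1)}:v)=I^{(k)}$, and this is precisely where the hypothesis that $|\frak{p}\cap A|$ equals $1$ (rather than some other uniform constant) is used: if this cardinality varied among the associated primes, then multiplication by $v$ would fail to shift all $\frak{p}_i$-valuations by the same amount, and the identity would break down.
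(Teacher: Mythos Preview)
Your proposal is correct and is exactly the paper's approach: the proposition is obtained as the $t=1$ case of Remark \ref{phi}, which establishes the colon identity $(I^{(k+t)}:v)=I^{(k)}$ and then invokes Proposition \ref{colon}. Your unpacking of the argument matches the remark in every detail.
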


As an example of ideals which satisfy the assumptions of Proposition \ref{one}, we consider the cover ideal of bipartite graphs. Let $G$ be a graph with vertex set $V(G)=\{v_1, \ldots, v_n\}$ and edge set $E(G)$. A subset
$C\subseteq V(G)$ is a {\it minimal vertex cover} of $G$ if, first, every
edge of $G$ is incident with a vertex in $C$ and, second, there is no
proper subset of $C$ with the first property. For a graph $G$ the {\it cover ideal} of $G$ is defined by $$J_G=\bigcap_{\{v_i
,v_j\}\in E(G)}\langle x_i,x_j\rangle.$$ For instance, unmixed squarefree
monomial ideals of height two are just cover ideals of graphs. The
name cover ideal comes from the fact that $J_G$ is generated by squarefree
monomials $x_{i_1}, \ldots, x_{i_r}$ with $\{v_{i_1}, \ldots, v_{i_r}\}$ is
a minimal vertex cover of $G$. A graph $G$ is {\it bipartite}
if there exists a partition $V(G)=U\cup W$ with $U\cap W=\varnothing$ such
that each edge of $G$ is of the form $\{v_i,v_j\}$ with $v_i\in U$ and
$v_j\in W$.

\begin{cor} \label{cover}
Let $G$ be a bipartite graph and $J_G$ be the cover ideal of $G$. Then for every integer $k\geq 1$, the inequalities $${\rm sdepth}(J_G^{(k+1)})\leq {\rm sdepth}(J_G^{(k)})$$and $${\rm sdepth}(S/J_G^{(k+1)})\leq {\rm sdepth}(S/J_G^{(k)})$$hold.
\end{cor}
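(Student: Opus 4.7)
The plan is to reduce the corollary to Proposition \ref{one} by exhibiting a subset $A$ of variables that meets every associated prime of $S/J_G$ in exactly one variable. Once such an $A$ is produced, the two inequalities follow verbatim from Proposition \ref{one}.

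First I would identify $\mathrm{Ass}(S/J_G)$. Since $J_G = \bigcap_{\{v_i,v_j\} \in E(G)} \langle x_i, x_j\rangle$ is already written as an intersection of monomial primes and $J_G$ is squarefree (hence radical), the associated primes of $S/J_G$ are precisely the minimal elements of the family $\{\langle x_i, x_j\rangle : \{v_i,v_j\}\in E(G)\}$. All of these primes have height two, so there can be no proper containments among them, and hence every $\langle x_i, x_j\rangle$ arising from an edge is an associated prime.

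Next I would exploit bipartiteness. Let $V(G)=U\cup W$ be a bipartition of $G$, and set
\[
A := \{x_i : v_i\in U\} \subseteq \{x_1,\ldots,x_n\}.
\]
For any edge $\{v_i,v_j\}$ of $G$ exactly one endpoint lies in $U$ (and the other in $W$), so the corresponding associated prime $\mathfrak{p}=\langle x_i,x_j\rangle$ satisfies $|\mathfrak{p}\cap A|=1$. Thus $A$ verifies the hypothesis of Proposition \ref{one} for $I=J_G$, and that proposition yields both stated inequalities.

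There is no real obstacle: the only substantive step is the description of $\mathrm{Ass}(S/J_G)$, which is immediate from the given intersection formula for $J_G$ combined with the height-two observation, and the bipartition directly supplies the required $A$.
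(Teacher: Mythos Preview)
Your proposal is correct and follows exactly the paper's approach: take the bipartition $V(G)=U\cup W$, set $A=\{x_i:v_i\in U\}$, observe that each associated prime $\langle x_i,x_j\rangle$ of $S/J_G$ (coming from an edge) meets $A$ in exactly one variable, and apply Proposition \ref{one}. Your extra line justifying that all the edge-primes are associated (via the height-two observation ruling out containments) is a harmless elaboration of what the paper states without proof.
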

\begin{proof}
Let $V(G)=U\cup W$ be a partition for the vertex set of $G$. Note that $${\rm Ass}(S/J_G)=\big\{\langle x_i,x_j\rangle:\{v_i
,v_j\}\in E(G) \big\}.$$Thus for every $\frak{p}\in {\rm Ass}(S/J_G)$, we have $|\frak{p}\cap A|=1$, where $$A=\{x_i: v_i\in U\}.$$Now Proposition \ref{one} completes the proof of the assertion.
\end{proof}
It is known \cite[Theorem 5.1]{hht} that for a bipartite graph $G$ with cover ideal $J_G$, we have $J_G^{(k)}=J_G^k$, for every integer $k\geq 1$. Therefore we conclude the following result from  Corollary \ref{cover}.

\begin{cor}
Let $G$ be a bipartite graph and $J_G$ be the cover ideal of $G$. Then for every integer $k\geq 1$, the inequalities $${\rm sdepth}(J_G^{k+1})\leq {\rm sdepth}(J_G^k)$$and $${\rm sdepth}(S/J_G^{k+1})\leq {\rm sdepth}(S/J_G^k)$$hold.
\end{cor}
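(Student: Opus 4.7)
The plan is to deduce this corollary as an immediate substitution from the two ingredients already in hand. The key inputs are Corollary \ref{cover}, which gives the analogous inequalities for the symbolic powers $J_G^{(k)}$ of the cover ideal of a bipartite graph, and the result of Herzog, Hibi, and Trung (\cite[Theorem 5.1]{hht}), quoted just before the statement, which asserts that for a bipartite graph $G$ one has $J_G^{(k)} = J_G^k$ for every integer $k\geq 1$.

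First, I would fix an arbitrary integer $k\geq 1$. By \cite[Theorem 5.1]{hht}, the equalities $J_G^{(k)} = J_G^k$ and $J_G^{(k+1)} = J_G^{k+1}$ hold, so the quotients $S/J_G^{(k)}$ and $S/J_G^k$ (respectively the ideals $J_G^{(k)}$ and $J_G^k$) coincide as $\mathbb{Z}^n$-graded $S$-modules. In particular, their Stanley depths are equal, and similarly for the $(k+1)$st powers.

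Next, I would invoke Corollary \ref{cover}, which yields
$$\mathrm{sdepth}(J_G^{(k+1)}) \leq \mathrm{sdepth}(J_G^{(k)}) \quad \textrm{and} \quad \mathrm{sdepth}(S/J_G^{(k+1)}) \leq \mathrm{sdepth}(S/J_G^{(k)}).$$
Rewriting each symbolic power as the corresponding ordinary power via the equalities from the previous step then gives the desired inequalities
$$\mathrm{sdepth}(J_G^{k+1}) \leq \mathrm{sdepth}(J_G^k) \quad \textrm{and} \quad \mathrm{sdepth}(S/J_G^{k+1}) \leq \mathrm{sdepth}(S/J_G^k).$$

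There is no real obstacle here; the entire content of the corollary is the combination of the bipartite symbolic/ordinary equality with Corollary \ref{cover}. The only thing worth being careful about is simply citing \cite[Theorem 5.1]{hht} precisely and noting that the equality $J_G^{(k)}=J_G^k$ holds for every $k\geq 1$ (not only asymptotically), so that the substitution is legal for each fixed $k$.
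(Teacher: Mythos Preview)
Your proposal is correct and matches the paper's approach exactly: the paper simply remarks, just before stating the corollary, that by \cite[Theorem 5.1]{hht} one has $J_G^{(k)}=J_G^k$ for all $k\geq 1$, and then deduces the result from Corollary \ref{cover}. There is nothing to add.
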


Let $G$ be a non-bipartite graph and let $J_G$ be its cover ideal. We do not know whether the inequalities $${\rm sdepth}(J_G^{(k+1)})\leq {\rm sdepth}(J_G^{(k)})$$and $${\rm sdepth}(S/J_G^{(k+1)})\leq {\rm sdepth}(S/J_G^{(k)})$$hold for every integer $k\geq 1$. However, we will see in Corollary \ref{nonbi} that we always have the following inequalities.$${\rm sdepth}(J_G^{(k+2)})\leq {\rm sdepth}(J_G^{(k)}) \ \ \ \ \ {\rm sdepth}(S/J_G^{(k+2)})\leq {\rm sdepth}(S/J_G^{(k)})$$In fact, we can prove something stronger as follows.

\begin{thm} \label{unmix}
Let $I$ be an unmixed squarefree monomial ideal and assume that ${\rm ht}(I)=d$. Then for every integer $k\geq 1$ the inequalities $${\rm sdepth}(I^{(k+d)})\leq {\rm sdepth}(I^{{(k)}})$$and$${\rm sdepth}(S/I^{(k+d)})\leq {\rm sdepth}(S/I^{{(k)}})$$hold.
\end{thm}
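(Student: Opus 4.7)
My plan is to apply Remark \ref{phi} with the subset $A$ taken to be the full set of variables $\{x_1, \ldots, x_n\}$. The key observation is that the uniformity hypothesis of the remark---that $|\mathfrak{p}_i \cap A|$ is the same integer $t$ for every associated prime $\mathfrak{p}_i$ of $S/I$---is automatic in the unmixed setting: because $I$ is squarefree, every $\mathfrak{p}_i \in {\rm Ass}(S/I)$ is generated by variables, and because $I$ is unmixed of height $d$, every such $\mathfrak{p}_i$ is generated by exactly $d$ of them. Hence $|\mathfrak{p}_i \cap \{x_1, \ldots, x_n\}| = d$ for each $i$.

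Setting $v = x_1 x_2 \cdots x_n$ and $t = d$, Remark \ref{phi} then gives $(I^{(k+d)} : v) = I^{(k)}$ for every $k \geq 1$. I then invoke Proposition \ref{colon} twice. Applied with the pair $I_1 = I^{(k+d)}$, $I_2 = 0$, it yields ${\rm sdepth}(I^{(k+d)}) \leq {\rm sdepth}(I^{(k+d)} : v) = {\rm sdepth}(I^{(k)})$. Applied with the pair $I_1 = S$, $I_2 = I^{(k+d)}$ (noting that $(S : v) = S$), it yields ${\rm sdepth}(S/I^{(k+d)}) \leq {\rm sdepth}(S/I^{(k)})$, completing both inequalities.

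There is essentially no genuine obstacle here; the theorem is a direct instance of the machinery already developed in Remark \ref{phi} and Proposition \ref{colon}. The only step that requires a moment of thought is recognizing that an unmixed squarefree ideal of height $d$ satisfies the constancy condition of Remark \ref{phi} with $t = d$ by taking $A$ to be the set of all variables. The colon identity $(I^{(k+d)} : v) = I^{(k)}$ itself rests on the elementary criterion that a monomial $u$ lies in $\mathfrak{p}_i^k$ if and only if $\sum_{x_j \in \mathfrak{p}_i} {\rm deg}_{x_j}(u) \geq k$, combined with the fact that $v$ contributes exactly $d$ to this sum for every associated prime $\mathfrak{p}_i$, which is precisely the content already verified in Remark \ref{phi}.
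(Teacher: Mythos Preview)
Your proof is correct and follows exactly the paper's approach: take $A=\{x_1,\ldots,x_n\}$, observe that unmixedness of height $d$ gives $|\mathfrak{p}\cap A|=d$ for every $\mathfrak{p}\in{\rm Ass}(S/I)$, and invoke Remark~\ref{phi}. Your write-up simply unpacks the details of Remark~\ref{phi} (the colon identity and the two applications of Proposition~\ref{colon}) that the paper leaves implicit.
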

\begin{proof}
Let $A= \{x_1, \ldots, x_n\}$ be the whole set of variables. Then for every prime ideal  $\frak{p}\in {\rm Ass}(S/I)$, we have $|\frak{p}\cap A|=d$. Hence the assertion follows from Remark \ref{phi}.
\end{proof}
Sine the cover ideal of every graph $G$ is unmixed of height two, we conclude the following result.
\begin{cor} \label{nonbi}
Let $G$ be an arbitrary graph and $J_G$ be the cover ideal of $G$. Then for every integer $k\geq 1$, the inequalities $${\rm sdepth}(J_G^{(k+2)})\leq {\rm sdepth}(J_G^{(k)})$$and $${\rm sdepth}(S/J_G^{(k+2)})\leq {\rm sdepth}(S/J_G^{(k)})$$hold.
\end{cor}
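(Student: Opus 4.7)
The plan is to observe that Corollary \ref{nonbi} is an immediate specialization of Theorem \ref{unmix} once we verify that the cover ideal $J_G$ of any graph satisfies the hypotheses of that theorem, namely that it is an unmixed squarefree monomial ideal of height $2$. Thus the strategy is entirely structural: identify the associated primes of $S/J_G$, check that they all have the same height, and then invoke Theorem \ref{unmix} with $d=2$.

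First I would recall the defining intersection
\[
J_G=\bigcap_{\{v_i,v_j\}\in E(G)}\langle x_i,x_j\rangle,
\]
which expresses $J_G$ as an intersection of squarefree monomial primes of height $2$. From this presentation it is immediate that $J_G$ is a squarefree monomial ideal, and that the associated primes of $S/J_G$ are exactly the edge primes $\langle x_i,x_j\rangle$ for $\{v_i,v_j\}\in E(G)$ (after possibly discarding redundant intersectands, though in the cover-ideal case the decomposition is already irredundant whenever $G$ has no isolated vertices/multiple edges). In particular, every $\frak{p}\in{\rm Ass}(S/J_G)$ has height exactly $2$, so $J_G$ is unmixed of height $d=2$.

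Having verified the hypothesis, I would then simply apply Theorem \ref{unmix} with $I=J_G$ and $d=2$ to conclude that for every $k\geq 1$,
\[
{\rm sdepth}(J_G^{(k+2)})\leq {\rm sdepth}(J_G^{(k)}) \qquad \text{and} \qquad {\rm sdepth}(S/J_G^{(k+2)})\leq {\rm sdepth}(S/J_G^{(k)}).
\]

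There is essentially no obstacle here: the entire content has been absorbed into Theorem \ref{unmix}, and the only thing to verify is the well-known fact that edge primes have height $2$ and that $J_G$ is unmixed (which is the reason one introduces the cover ideal in the first place). The proof can therefore be written in one or two sentences.
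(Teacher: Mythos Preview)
Your proposal is correct and matches the paper's argument exactly: the paper simply notes that the cover ideal of every graph is unmixed of height two and invokes Theorem \ref{unmix} with $d=2$. Your additional remarks on why ${\rm Ass}(S/J_G)$ consists of the edge primes are accurate and merely spell out what the paper takes for granted.
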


\begin{cor} \label{lim}
Let $I$ be an unmixed squarefree monomial ideal and assume that ${\rm ht}(I)=d$. Then for every integer $1\leq \ell \leq d$ the sequences $$\bigg\{{\rm sdepth}(S/I^{(kd+\ell)})\bigg\}_{k\in \mathbb{Z}_{\geq 0}} \ \ \ \ and  \ \ \ \  \bigg\{{\rm sdepth}(I^{(kd+\ell)})\bigg\}_{k\in \mathbb{Z}_{\geq 0}}$$ converge.
\end{cor}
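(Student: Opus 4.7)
The plan is to observe that Theorem \ref{unmix} tells us the two sequences are eventually monotone non-increasing, and then to combine this with the fact that the Stanley depth of a nonzero finitely generated $\mathbb{Z}^n$-graded $S$-module is a non-negative integer bounded above by $n$.

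First I would fix $\ell$ with $1 \leq \ell \leq d$ and consider the sequence
$$a_k := {\rm sdepth}(I^{(kd+\ell)}), \qquad k \in \mathbb{Z}_{\geq 0}.$$
For every $k \geq 0$, the integer $m := kd + \ell$ is at least $1$, so Theorem \ref{unmix} applies to give
$${\rm sdepth}(I^{(m+d)}) \leq {\rm sdepth}(I^{(m)}),$$
which rewrites as $a_{k+1} \leq a_k$. Thus $\{a_k\}$ is a non-increasing sequence. The same argument, using the other inequality in Theorem \ref{unmix}, shows that $\{{\rm sdepth}(S/I^{(kd+\ell)})\}_{k \geq 0}$ is non-increasing.

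Since the Stanley depth of any nonzero $\mathbb{Z}^n$-graded $S$-module is an integer between $0$ and $n$, and since by convention the Stanley depth of the zero module is $0$, both sequences take values in the finite set $\{0,1,\ldots,n\}$. A non-increasing sequence of integers bounded below must eventually be constant, hence convergent.

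There is essentially no obstacle: the only thing to check is that the shift by $d$ in Theorem \ref{unmix} aligns with the step of the sequence, which is automatic because the step is exactly $d$. The entire proof reduces to combining the monotonicity supplied by Theorem \ref{unmix} with the trivial lower bound on Stanley depth.
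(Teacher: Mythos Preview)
Your proof is correct and follows essentially the same approach as the paper: both apply Theorem \ref{unmix} to conclude that the sequences are non-increasing, and then use the trivial lower bound on Stanley depth to obtain convergence. The paper's version is merely more terse, omitting the explicit verification that the step of the sequence matches the shift in Theorem \ref{unmix}.
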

\begin{proof}
Note that by Theorem \ref{unmix}, the sequences $$\bigg\{{\rm sdepth}(S/I^{(kd+\ell)})\bigg\}_{k\in \mathbb{Z}_{\geq 0}} \ \ \ \ {\rm and}  \ \ \ \  \bigg\{{\rm sdepth}(I^{(kd+\ell)})\bigg\}_{k\in \mathbb{Z}_{\geq 0}}$$ are both nonincreasing and so convergent.
\end{proof}

We do not know whether the Stanley depth of symbolic powers of a squarefree monomial ideal stabilizes. However, Corollary \ref{lim} shows that one can expect a nice limit behavior for the Stanley depth of symbolic powers of squarefree monomial ideals. Indeed it shows that for unmixed squarefree monomial ideals of height $d$, there exist two sets $L_1, L_2$ of cardinality $d$, such that $${\rm sdepth}(S/I^{(k)})\in L_1\ \ \  {\rm and}  \ \ \ \ {\rm sdepth}(I^{(k)})\in L_2,$$ for every $k\gg 0$. The following theorem shows that the situation is even better. Indeed we can even choose the sets $L_1$ and $L_2$ of smaller cardinality.

\begin{thm} \label{better}
Let $I$ be an unmixed squarefree monomial ideal and assume that ${\rm ht}(I)=d$. Suppose that $t$ is the number of positive divisors of $d$. Then
\begin{itemize}
\item[(i)] There exists a set $L_1$ of cardinality $t$, such that ${\rm sdepth}(S/I^{(k)})\in L_1$, for every $k\gg 0$.
\item[(ii)] There exists a set $L_2$ of cardinality $t$, such that ${\rm sdepth}(I^{(k)})\in L_2$, for every $k\gg 0$.
\end{itemize}
\end{thm}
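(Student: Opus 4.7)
The plan is to combine the convergence statement of Corollary \ref{lim} with the multiplicative inequality from Corollary \ref{csymbol}. For each residue $\ell \in \{1, \ldots, d\}$ modulo $d$, Corollary \ref{lim} produces a limit $c_\ell := \lim_{k \to \infty} {\rm sdepth}(S/I^{(kd+\ell)})$. Since these sequences are integer-valued and non-increasing, they actually stabilize: for all $m$ sufficiently large, ${\rm sdepth}(S/I^{(m)}) = c_{\ell(m)}$, where $\ell(m) \in \{1,\ldots,d\}$ is the unique integer with $m \equiv \ell(m) \pmod d$. So a priori there are at most $d$ distinct eventual values, and the task is to cut this count down to $t$.

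The heart of the argument will be the claim that $c_\ell$ depends only on $\gcd(\ell,d)$. Since $\gcd(\ell,d)$ ranges over exactly the $t$ positive divisors of $d$ as $\ell$ runs through $\{1,\ldots,d\}$, this will suffice. To prove the claim I would invoke Corollary \ref{csymbol}: for any $k,s \geq 1$, ${\rm sdepth}(S/I^{(ks)}) \leq {\rm sdepth}(S/I^{(s)})$. Taking $s$ sufficiently large with $s \equiv \ell \pmod d$ turns this into $c_{k\ell \bmod d} \leq c_\ell$ for every $k \geq 1$. Now pick $k \geq 1$ with $k\ell \equiv \gcd(\ell,d) \pmod d$ (available by B\'ezout's identity, after adjusting by a multiple of $d$ to ensure positivity); this yields $c_{\gcd(\ell,d)} \leq c_\ell$. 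For the reverse inequality, apply the same bound with $\ell$ replaced by $\gcd(\ell,d)$ and $k := \ell/\gcd(\ell,d)$, a positive integer since $\gcd(\ell,d) \mid \ell$, to obtain $c_\ell \leq c_{\gcd(\ell,d)}$.

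To conclude (i), set $L_1 := \{c_e : e \text{ is a positive divisor of } d\}$, which has cardinality at most $t$; if strictly less, pad with arbitrary integers to reach exactly $t$. For every $m \gg 0$, ${\rm sdepth}(S/I^{(m)}) = c_{\ell(m)} = c_{\gcd(m,d)} \in L_1$. Part (ii) is identical, using the parallel inequality ${\rm sdepth}(I^{(ks)}) \leq {\rm sdepth}(I^{(s)})$ from Corollary \ref{csymbol}. The main obstacle I expect is spotting that the weak multiplicativity ${\rm sdepth}(S/I^{(ks)}) \leq {\rm sdepth}(S/I^{(s)})$, together with periodic stabilization modulo $d$, forces the symmetric gcd-collapse $c_\ell = c_{\gcd(\ell,d)}$; without this observation one is stuck with the bound $d$ coming directly from Corollary \ref{lim}.
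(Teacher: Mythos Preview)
Your proposal is correct and follows essentially the same route as the paper: both arguments use Corollary~\ref{lim} to reduce to the residue-class limits $c_\ell$, and then combine Corollary~\ref{csymbol} with a B\'ezout-type choice of multiplier to show $c_\ell = c_{\gcd(\ell,d)}$, yielding at most $t$ eventual values. Your explicit observation that the integer-valued non-increasing sequences actually stabilize, and your remark about padding $L_1$ to exact cardinality $t$, are minor clarifications the paper leaves implicit.
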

\begin{proof}
(i) Based on Corollary \ref{lim}, it is enough to prove that for every couple of integers $1\leq \ell_1, \ell_2\leq d$, with ${\rm gcd}(d, \ell_1)=\ell_2$, we have $$\lim_{k\rightarrow \infty} {\rm sdepth}(S/I^{(kd+\ell_1)})= \lim_{k\rightarrow \infty}{\rm sdepth}(S/I^{(kd+\ell_2)}).$$Set $m=\frac{\ell_1}{\ell_2}$. Then by Corollary \ref{csymbol}, $$\lim_{k\rightarrow \infty}{\rm sdepth}(S/I^{(kd+\ell_2)})\geq \lim_{k\rightarrow \infty}{\rm sdepth}(S/I^{(mkd+m\ell_2)})=$$
$$\lim_{k\rightarrow \infty}{\rm sdepth}(S/I^{(mkd+\ell_1)})=\lim_{k\rightarrow \infty}{\rm sdepth}(S/I^{(kd+\ell_1)}),$$where the last equality holds, because the sequence $$\bigg\{{\rm sdepth}(S/I^{(mkd+\ell_1)})\bigg\}_{k\in \mathbb{Z}_{\geq 0}}$$ is a subsequence of the convergent sequence $$\bigg\{{\rm sdepth}(S/I^{(kd+\ell_1)})\bigg\}_{k\in \mathbb{Z}_{\geq 0}}.$$

On the other hand, since ${\rm gcd}(d, \ell_1)=\ell_2$, there exists an integer $m'\geq 1$, such that $m'\ell_1$ is congruence $\ell_2$ modulo $d$. Now by a similar argument as above, we have $$\lim_{k\rightarrow \infty}{\rm sdepth}(S/I^{(kd+\ell_1)})\geq \lim_{k\rightarrow \infty}{\rm sdepth}(S/I^{(m'kd+m'\ell_1)})=$$
$$\lim_{k\rightarrow \infty}{\rm sdepth}(S/I^{(kd+\ell_2)}),$$and hence $$\lim_{k\rightarrow \infty} {\rm sdepth}(S/I^{(kd+\ell_1)})= \lim_{k\rightarrow \infty}{\rm sdepth}(S/I^{(kd+\ell_2)}).$$

(ii) The proof is similar to the proof of (i).
\end{proof}






\begin{thebibliography}{10}

\bibitem {a} J. Apel, On a conjecture of R.P. Stanley; Part II, Quotients
Modulo Monomial Ideals, {\it J. Algebraic Combin.} {\bf 17} (2003) 57--74.

\bibitem{c} M. Cimpoeas, Several inequalities regarding Stanley depth, {\it Romanian Journal of Math.
and Computer Science} {\bf 2}, (2012), 28--40.

\bibitem {hh'} J. Herzog, T. Hibi, {\it Monomial Ideals}, Springer-Verlag,
    2011.

\bibitem {hht} J. Herzog, T. Hibi and N. V. Trung, Symbolic powers of monomial ideals
and vertex cover algebras, {\it Adv. Math.} {\bf 210} (2007), 304--322.

\bibitem {i'} M. Ishaq, Upper bounds for the Stanley depth, {\it Comm. Algebra}, to appear.

\bibitem{p} D. Popescu, An inequality between depth and Stanley depth,\\
\rm{http://arxiv.org/pdf/0905.4597.pdf}, Preprint.

\bibitem {psty} M. R. Pournaki, S. A. Seyed Fakhari, M. Tousi, S. Yassemi,
    What is $\ldots$ Stanley depth? {\it Notices Amer. Math. Soc.} {\bf 56}
    (2009), no. 9, 1106--1108.

\bibitem {s1} S. A. Seyed Fakhari, Stanley depth of the integral closure of
    monomial ideals, {\it Collect. Math.}, to  appear


\bibitem {s} R. P. Stanley, Linear Diophantine equations and local
    cohomology, {\it Invent. Math.} {\bf 68} (1982), no. 2, 175--193.

\end{thebibliography}
\end{document}